\documentclass{amsart}
\usepackage[utf8]{inputenc}
\usepackage[english]{babel}
\usepackage{amsmath}
\usepackage{amsfonts}
\usepackage{amssymb}
\usepackage{amsthm}
\usepackage{hyperref}
\usepackage{mathtools}
\usepackage{longtable}
\usepackage{graphicx}
\usepackage{bbm}
\usepackage[backrefs,lite]
{amsrefs}

\newcommand{\C}{\mathbb{C}}
\newcommand{\Q}{\mathbb{Q}}
\newcommand{\Z}{\mathbb{Z}}
\newcommand{\T}{\mathbb{T}}

\DeclareMathOperator{\Cl}{Cl}

\DeclareMathOperator{\GL}{GL}

\DeclareMathOperator{\im}{im}
\DeclareMathOperator{\cone}{cone}
\DeclareMathOperator{\Stab}{Stab}
\DeclareMathOperator{\ord}{ord}

\DeclareMathOperator{\age}{age}

\newcommand{\fpart}[1]{\left\{ #1 \right\}} % fractional part in [0,1)
\theoremstyle{plain}
\newtheorem{theorem}{Theorem}[section]
\newtheorem{lemma}[theorem]{Lemma}
\newtheorem{proposition}[theorem]{Proposition}
\newtheorem{corollary}[theorem]{Corollary}

\theoremstyle{definition}
\newtheorem{definition}[theorem]{Definition}

\newtheorem{remark}[theorem]{Remark}
\newtheorem{example}[theorem]{Example}

\newtheorem{algorithm}[theorem]{Algorithm}
\newtheorem{construction}[theorem]{Construction}
\newtheorem{setting}[theorem]{Setting}
\newtheorem{reminder}[theorem]{Reminder}

\title[A canonicity criterion for toric varieties]{A canonicity criterion for toric varieties \\ and the classification of canonical 4-simplices}
\author{Marco Ghirlanda}
\email{marco.ghirlanda@uni-tuebingen.de}
\address{Mathematisches Institut, Auf d. Morgenstelle 10, 72076 T\"ubingen}
\date{}

\begin{document}

\begin{abstract}
Based on the Reid--Shepherd-Barron--Tai criterion for canonical and terminal quotient singularities, we characterize canonicity and terminality of a toric variety in terms of its local class group actions. Specializing it to the Picard number one setting, we arrive at a classification algorithm for canonical and terminal fake weighted projective spaces in any dimension.
In dimension four it gives, up to isomorphism, 710450 canonical fake weighted projective spaces.
We take a look at the corresponding Calabi--Yau hypersurfaces, compute the Fine interior of the associated canonical simplices, and discuss the results.
\end{abstract}

\maketitle

\section{Introduction}

A \emph{canonical polytope} is a lattice polytope with the origin as the only interior lattice point. These polytopes play a
central role in the context
of mirror symmetry as candidate Newton polytopes for Calabi--Yau hypersurfaces~\cites{Bat94,Bat17}.
There are extensive classifications, see \cites{Bat94,KS3,KS4,Ghi} for the reflexive case and \cites{Kas2,Kas3} for the more general canonical case.

In this article, we develop a new approach.
The idea is to use the Reid--Shepherd-Barron--Tai criterion for canonicity
of quotient singularities~\cite{MS84}.
Given an $n$-dimensional  $\Q$-factorial projective toric variety $X$,
we look at Cox's quotient construction $X = \hat X/H$, where
$\hat X \subseteq \C^r$ is an open toric
subvariety and $H$ is a quasitorus acting diagonally on $\C^r$.
For the toric fixed points $x_i \in X$, $i = 1, \ldots, s$, let $H_i \subseteq H$ be the (finite) subgroup having the local class group $\Cl(X,x_i)$ as its character group. 
The criterion is formulated in terms of the \emph{age}
of the $h\in H_i$, i.e. the sum of exponents of the eigenvalues of $h$ acting on $\C^r$; see Theorem~\ref{thm:main}, treating moreover the terminal case.

\begin{theorem}
Let $X$ be a $\Q$-factorial projective toric variety. Then $X$ is canonical
if and only if $\mathrm{age}(h) \ge 1$ for all $1 \neq h\in H_i$, $i=1,\dots,s$.
\end{theorem}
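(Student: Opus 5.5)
Canonicity is local, and $X$ is covered by the affine toric charts $X_{\sigma_i}$ attached to the maximal cones $\sigma_i$, $i=1,\dots,s$, of the fan. These cones correspond to the toric fixed points $x_i$. So it suffices to show that $X_{\sigma_i}$ is canonical if and only if $\age(h)\ge 1$ for all $1\neq h\in H_i$. Since $X$ is $\Q$-factorial, each $\sigma_i$ is simplicial, generated by primitive ray generators $v_{j}$, $j\in J_i$, with $|J_i|=n$. In Cox's construction the chart $X_{\sigma_i}$ is the image of $\hat X_i=\{z\in\C^r;\ z_k\neq 0 \text{ for } k\notin J_i\}\cong \C^{n}\times(\C^*)^{r-n}$, and $X_{\sigma_i}=\hat X_i/H$. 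First we will check that the subgroup $H_i$ is the stabilizer of the point $(0,\dots,0,1,\dots,1)$, i.e.\ the isotropy group of the fixed point. Its character group is $K/\langle \deg T_k;\ k\notin J_i\rangle=\Cl(X,x_i)$. Taking the slice $\{z_k=1,\ k\notin J_i\}$ then gives $X_{\sigma_i}\cong \C^{n}/H_i$, where $H_i$ acts diagonally on the coordinates $z_j$, $j\in J_i$, and trivially on the remaining coordinates. Hence the age of $h\in H_i$ computed on $\C^r$ equals its age on $\C^n$.

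Next we will show that $H_i$ acts on $\C^n$ without pseudo-reflections, so that the Reid--Shepherd-Barron--Tai criterion applies. A nontrivial $h\in H_i$ fixing a hyperplane $z_j=0$ pointwise would stabilize a point with only $z_j$ vanishing. That would make the isotropy of the corresponding codimension-one orbit, whose character group is $\Cl(X,\text{generic point of }D_j)$, nontrivial. This contradicts the fact that $X$ is smooth in codimension one, where all local class groups are trivial. Equivalently, in lattice terms, the $v_j$ are primitive, so no element of $N/\langle v_j\rangle$ of order greater than one has all but one coordinate zero.

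With this in place, the Reid--Shepherd-Barron--Tai criterion \cite{MS84} gives that $\C^n/H_i$ is canonical if and only if $\age(h)\ge1$ for all $1\neq h\in H_i$. Note that $h$ and $h^{-1}$ are both in $H_i$, so the condition covers all the ages the criterion requires. Combining the charts proves the theorem. Alternatively, one can argue directly in toric terms. Elements of $H_i$ correspond to points $u=\sum_{j\in J_i} a_j v_j$ of $N\cap\sigma_i$ with $a_j\in[0,1)$, modulo the sublattice generated by the $v_j$. The age is then $\sum_j a_j$, i.e.\ the value at $u$ of the linear function that equals $1$ on all the $v_j$. Canonicity of a simplicial cone means exactly that no nonzero lattice point of $\sigma_i$ has value less than $1$, and it suffices to test the points of the fundamental parallelepiped.

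The main obstacle is the bookkeeping in the first step. One has to identify $H_i$, defined via its character group $\Cl(X,x_i)$, precisely with the group $N/\langle v_j;\ j\in J_i\rangle$, in such a way that the eigenvalue exponents of $h$ are the fractional coordinates $a_j$. This requires dualizing the exact sequence $0\to M\to\Z^r\to K\to0$ restricted to $J_i$ and checking that the characters $\deg T_j$ evaluate to $e^{2\pi i a_j}$. I would do this by writing $h=\exp(2\pi i\,\cdot)$ applied to the element of $(\Q/\Z)^{J_i}$ given by the coordinates of $u$.
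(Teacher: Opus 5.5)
Your proposal is correct and follows essentially the paper's route: you realize $X(\sigma_i)$ as $Y_{\sigma_i}/H_i$ via the affine slice, rule out quasi-reflections, apply Reid--Shepherd-Barron--Tai, and glue over the maximal cones. Your exclusion of quasi-reflections, via the triviality of the isotropy group at points with exactly one vanishing coordinate, is the same fact the paper invokes as almost freeness of $Q$; and the identification of $H_i$ with $N/\langle v_j\rangle$ that you flag as the main obstacle is needed only for your optional lattice-point variant, not for the main argument.
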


We apply this result to the $\Q$-factorial projective toric varieties
of Picard number one, called \emph{fake weighted projective spaces}.
The adapted version of our criterion, see Theorem~\ref{thm:fwps}, is the base of a
general classification algorithm for canonical and terminal Fano simplices, see Algorithm~\ref{algo:classification}.
The algorithm reproduces Kasprzyk's $225$ canonical fake weighted projective spaces of dimension three
and his $35947$ terminal fake weighted projective spaces of dimension four.
The new achievement is the following.
\begin{theorem}\label{classification}
Up to isomorphism, there are $710450$ canonical fake weighted projective
spaces of dimension four.
\end{theorem}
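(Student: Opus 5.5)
This count is the output of a finite computation, so the proof has three parts: reduce the classification to a finite, explicitly enumerable list of combinatorial data; apply the canonicity criterion of Theorem~\ref{thm:fwps} to each candidate; and remove duplicates up to isomorphism.

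\emph{Combinatorial data.} A four-dimensional fake weighted projective space $X$ is determined by its weight vector $Q=(q_0,\dots,q_4)$, i.e.\ the weights of the associated weighted projective space $\mathbb{P}(Q)$, together with the torsion part of $\Cl(X)$. Dually, $X$ is determined by a Fano simplex whose vertices $v_0,\dots,v_4$ satisfy $\sum q_i v_i=0$, and $X=\mathbb{P}(Q)/A$ for a finite abelian group $A$ acting diagonally.

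\emph{Step 1: finitely many weight vectors.} Canonicity of $X$ forces the associated weighted projective space $\mathbb{P}(Q)$ to be canonical. For Picard number one, the age condition on the cyclic local groups $H_i$ at the fixed points becomes an arithmetic condition on $Q$: every $1\ne h\in H_i$ must have age at least $1$. In particular, $\sum_{j} \fpart{k q_j/q_i}\ge 1$ must hold for the relevant $k$. These are precisely the well-formed weight vectors Kasprzyk classified for canonical weighted projective spaces. I would therefore first enumerate all well-formed $Q\in\Z_{>0}^5$ that satisfy this condition, using the standard bound $q_i \mid \sum_j q_j$ type constraints (or Kasprzyk's bound on $\sum q_j/q_i$) to make the search finite.

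\emph{Step 2: quotients by torsion.} For each such $Q$, I would enumerate the finite subgroups $A$ of the torus acting on $\mathbb{P}(Q)$ with $A$ acting freely in codimension one; this is equivalent to enumerating the sublattices of the lattice generated by the vertices. By Theorem~\ref{thm:fwps}, $X=\mathbb{P}(Q)/A$ is canonical iff every nontrivial element of each local group $H_i$, now an extension of the cyclic group by $A$, has age at least $1$. The size of $A$ is bounded, since canonicity bounds the normalized volume of the simplex; equivalently, the multiplicity $|A|$ divides a quantity bounded in terms of $Q$. Enumeration can proceed incrementally as in Algorithm~\ref{algo:classification}. Adding cyclic factors one at a time, one only needs to keep candidates that remain canonical, because canonicity passes down to the intermediate quotients: the sets $H_i$ only grow, so the age condition is inherited.

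\emph{Step 3: isomorphism classes and the main obstacle.} Two resulting simplices must be identified up to $\GL_4(\Z)$, which I would do by computing a normal form for each simplex, e.g.\ a Hermite normal form of the vertex matrix minimized over vertex permutations compatible with $Q$. Counting the distinct normal forms then gives the number $710450$. The main obstacle is Step~2: making the bound on $A$ sharp enough, and the age checks cheap enough, that the search over the many weight vectors is feasible. I would first try the inductive extension by cyclic groups of prime order, pruning by age at each stage. As a correctness check, the same pipeline should reproduce Kasprzyk's $225$ canonical threefolds and $35947$ terminal fourfolds.
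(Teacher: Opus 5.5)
Your plan is essentially the paper's Algorithm~\ref{algo:classification}. It enumerates canonical weight vectors via Kasprzyk's bounds and adjoins torsion step by step, with pruning justified by the heredity of canonicity (Proposition~\ref{prop:partial-rows}). It then applies the age test of Theorem~\ref{thm:fwps} together with the bound of Reminder~\ref{rem:bounds} on $\mu_1\cdots\mu_s$, and removes duplicates. The only difference is that the paper works on the Gale-dual side. There it adjoins invariant-factor torsion rows drawn from precomputed canonical single rows and normalizes by lexicographic row-minimality (Lemma~\ref{lem:lex_min}), where you use overlattices and Hermite normal forms.

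Two slips to fix. First, $q_i\mid\sum_j q_j$ is the Gorenstein condition and is not a valid constraint on canonical weights; Kasprzyk's ratio bounds are what you need. Second, the groups $A$ correspond to overlattices of the lattice generated by the vertices in which every $v_i$ stays primitive, not to sublattices.
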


On a midrange computer, with 16 threads, the algorithm terminates in approximately 12 minutes. In terms of polytopes, the result says that, up to unimodular
equivalence, there are $710450$ canonical simplices of dimension four.
The defining data of these polytopes are made available in~\cite{Zenodo}.

Finally, we consider the toric hypersurfaces with a canonical Newton
polytope.
Batyrev~\cite{Bat23} showed that the Kodaira dimension of their canonical
model is determined by a rational subpolytope of the Newton polytope,
called \emph{Fine interior}.
We computed the Fine interiors of all $710450$ canonical simplices of
dimension four. Their distribution by dimension is
\[
\begin{array}{c|ccccc}
\dim F(\Delta) & 0 & 1 & 2 & 3 & 4\\
\hline
\#\text{ simplices} & 387310 & 112672 & 95713 & 70130 & 44625
\end{array}
\]
In \cite{Bat17}, Batyrev showed that the canonical model associated to a
polytope is Calabi--Yau if and only if the Fine interior of the polytope
has dimension zero, and provided a combinatorial formula for its stringy
Euler number.
We computed the stringy Euler number of our $387310$ simplices with
zero-dimensional Fine interior, and obtained $94233$ distinct values,
of which only $852$ are integral. Their frequency distribution is the following.

\begin{center}\label{fig:euler}
\includegraphics[width=0.7\textwidth]{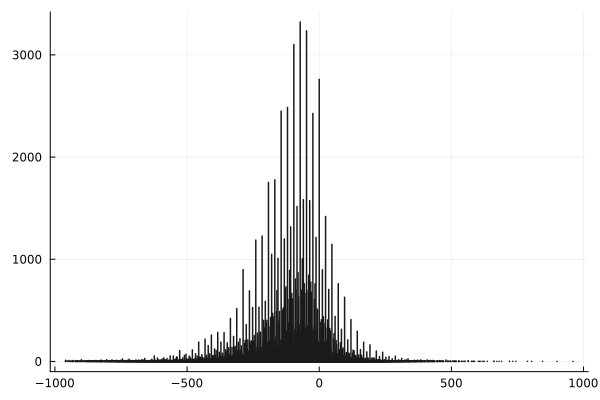}

Figure 1.
\end{center}
\medskip

The paper is organized as follows. In Section~\ref{sec:general} we establish
the canonical and terminal criterion for simplicial toric varieties in terms
of their local class group actions.
In Section~\ref{sec:fwps} we specialize to fake weighted projective spaces
and develop our classification algorithm. In Section~\ref{sec:fine} we compute and analyze
the Fine interiors of the resulting four-dimensional canonical simplices.

The author is grateful to Martin Bohnert and Professor Victor Batyrev for their suggestions concerning Section \ref{sec:fine}. He would also like to thank Professor Jürgen Hausen for his constant help and support throughout the writing of this paper, and for many suggestions that helped make the arguments clearer and more elegant.

\tableofcontents

\section{Characterizing canonicity and terminality}\label{sec:general}

The main result of this section is a criterion for canonicity and terminality of simplicial toric varieties in terms of their local class groups.

We assume the reader is familiar with the basic concepts
of toric geometry~\cite{CoxLittleSchenck}.
Let us fix the setting and the notation for the section.
By $\Sigma$ we denote a simplicial fan in $\Z^n$ and
by $v_1,\dots,v_r$ the primitive generators of its rays.
The \emph{generator matrix} of $\Sigma$ is the $n\times r$ matrix
$$
P=\begin{bmatrix}
    v_1 & \dots & v_r
\end{bmatrix}.
$$
We assume that $P$ is of rank $n$. An associated \emph{degree matrix}
$Q$ is obtained as follows. Consider $K=\Z^r/\im(P^T)$ and write
$K=\Z^k\times\Gamma$ with 
$$
\Gamma=\Z/\mu_1\Z\times\dots\times\Z/\mu_s\Z,
\quad
\mu_s\geq 2,
\quad
\mu_s\mid\mu_{s-1},\dots,\mu_2\mid\mu_1.
$$
Then the $i$-th column of $Q$ is the image $\omega_i \in K$ of $e_i\in\Z^r$
under the canonical projection $\Z^r \to K$:
$$
Q
=
\begin{bmatrix}
    \omega_1 & \dots & \omega_r
\end{bmatrix}
=
\begin{bmatrix}
 w_1 & \dots & w_r \\ \eta_1 & \dots & \eta_r
\end{bmatrix},
\quad
w_i\in\Z^k, \ \eta_i\in\Gamma.
$$
Note that the degree matrix $Q$ is \emph{almost free}, that means
that any $r-1$ of its columns generate $K$.

Let $X$ be the toric variety associated with $\Sigma$.
Then $\Cl(X)\cong K$.
We recall Cox's quotient construction \cite{Cox95}.
Denote by $C(\mu)$ the group of $\mu$-th roots of unity
and consider the characteristic quasitorus, having $K$
as its character group:
$$
H=(\C^*)^k\times C(\mu_1)\times\dots\times C(\mu_s).
$$
Taking the columns $\omega_i = (w_i, \eta_i)$ of the degree
matrix $Q$ as weights, we obtain a diagonal action of $H$
on $\C^r$; explicitly, the elements $h=(t,\zeta) \in H$
act via
$$
(t,\zeta)\cdot z=(t^{w_1}\zeta^{\eta_1}z_1,\dots,t^{w_r}\zeta^{\eta_r}z_r),
\quad
t^{w_i}=t_1^{w_{1i}}\cdots t_k^{w_{ki}},
\quad
\zeta^{\eta_i}=\zeta_1^{\eta_{1i}}\cdots \zeta_s^{\eta_{si}}.
$$
Cox's quotient construction presents $X$ as the quotient of the
$H$-action on the open toric subvariety of $\hat{X} \subseteq \C^r$
given by the fan
$$
\hat{\Sigma}=\{\hat{\sigma}; \ \sigma\in\Sigma\},
\qquad
\hat{\sigma}=\cone(e_i; \ v_i \in\sigma),
$$
where the quotient map $p \colon \hat{X}\rightarrow X$ arises from the
homomorphism $P \colon \Z^r\rightarrow\Z^n$.
As we assume $\Sigma$ to be simplicial, the quotient is geometric, i.e.,
the $p$-fibers are exactly the $H$-orbits.
Note that for the affine toric charts we have
$$
p^{-1}(X(\sigma)) = \hat X(\hat\sigma).
$$

\begin{example}\label{ex:1}
Let $X$ be the toric surface arising from the (unique) complete simplicial fan
with generator matrix
$$
P=\begin{bmatrix}
        1 & 1 & -2\\
        0 & 3 & -3
\end{bmatrix}.
$$
Then the divisor class group of $X$ is $\Cl(X)=K=\Z\times\Z/3\Z$, and a choice
for an associated degree matrix is
$$
Q=\begin{bmatrix}
        1 & 1 & 1\\
        \bar{0} & \bar{1} & \bar{2}
\end{bmatrix}.
$$
The \emph{characteristic quasitorus} is $H=\C^*\times C(3)$.
An element $(t,\zeta)\in H$ acts on $\C^3$ as follows
$$
(t,\zeta)\cdot(z_1,z_2,z_3)=(tz_1,t\zeta z_2,t\zeta^2 z_3).
$$
Furthermore, $\hat{X}=\C^3\setminus\{0\}$ and thus $X = (\C^3\setminus\{0\})/H$.
Taking first the quotient by the connected component $\C^*$, we see
$X\cong \mathbb P^2/C(3)$.
\end{example}

\begin{construction}\label{constr:slice}
Let $\Sigma$ be a simplicial fan in~$\Z^n$ with generator
matrix $P$.
Consider the associated toric variety $X$ with Cox's quotient
presentation $p \colon \hat X \to X$.
For any $n$-dimensional $\sigma \in \Sigma$,
the \emph{affine slice} at the distinguished point
$\hat x_{\hat \sigma} \in \hat X$ is
$$
Y_\sigma
\coloneqq
\{ x \in \C^r; \ x_i=1 \text{ for all } i = 1, \ldots, r \text{ with } \  v_i \not\in \sigma \}
\subseteq
\hat X.
$$
Note that $Y_\sigma$ is an $n$-dimensional
affine subspace of $\C^r$.
For the characteristic quasitorus $H$ and the
\emph{stabilizer} $H_\sigma := \Stab_H(Y_\sigma) \subseteq H$
of $Y_\sigma \subseteq \hat X$, we have
$$
H \cdot Y_\sigma = \hat X(\hat\sigma),
\qquad
X(\sigma) = \hat X(\hat\sigma)/H = Y_\sigma/H_\sigma.
$$
Moreover, the stabilizer $H_\sigma$ of $Y_\sigma$
equals the isotropy group $H_{\hat x_{\hat \sigma}}$
and thus its character group is given by
$$
\mathbb{X}(H_\sigma) = \Cl(X,x_\sigma),
$$
where $\Cl(X,x_\sigma)$ is the local class group of the distinguished
point $x_\sigma \in X$, that means the group of Weil divisors
modulo those being principal near $x_\sigma$. In particular,
as $X$ is $\Q$-factorial, $H_\sigma$ is finite.
\end{construction}

\begin{lemma}\label{lem:finite}
Consider a simplicial fan $\Sigma$ in $\Z^n$ with generator
matrix $P$ and associated degree matrix $Q$.
For any $\sigma \in \Sigma$, the $\Z^k$-parts $w_j$ of
the columns $\omega_j$ of~$Q$ with $v_j \not\in \sigma$
generate $\Q^k$ as a vector space.
\end{lemma}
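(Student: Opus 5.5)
We prove the lemma by linear algebra over $\Q$, using the exact sequence that defines $Q$. Tensoring
$$
0 \to \Z^n \xrightarrow{P^T} \Z^r \to K \to 0
$$
with $\Q$ gives the exact sequence
$$
0 \to \Q^n \xrightarrow{P^T} \Q^r \to K_\Q = \Q^k \to 0 .
$$
The map $\Q^r \to \Q^k$ sends $e_j \mapsto w_j$, because the torsion part $\Gamma$ dies after tensoring. Injectivity of $P^T$ holds since $P$ has rank $n$.

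Let $J = \{ j;\ v_j \not\in \sigma \}$ and $I = \{ i;\ v_i \in \sigma \}$. Since $\Sigma$ is simplicial, the $v_i$ with $i \in I$ are linearly independent.

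Step one: we claim that $\Q^r$ is spanned by the subspace $E_J = \langle e_j;\ j\in J\rangle$ together with $\im(P^T)$.

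To see this, let $\pi_I \colon \Q^r \to \Q^I$ be the projection onto the coordinates in $I$. It suffices to show that $\pi_I \circ P^T \colon \Q^n \to \Q^I$ is surjective. This composition is the transpose of the map $\Q^I \to \Q^n$, $e_i \mapsto v_i$. That map is injective because the $v_i$, $i\in I$, are linearly independent, so its transpose is surjective.

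Hence every $x \in \Q^r$ can be written as $x = P^T u + y$ with $\pi_I(y) = 0$, that is, $y \in E_J$.

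Step two: apply the projection $\Q^r \to \Q^k$. Since $\im(P^T)$ is its kernel, the image of $\Q^r$ equals the image of $E_J$, which is the span of the $w_j$ with $j \in J$. By exactness this image is all of $\Q^k$, so the $w_j$, $j\in J$, span $\Q^k$.

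The only point needing care is the identification of the $\Z^k$-parts with the images in $K_\Q$. This requires $K \otimes \Q = \Z^k\otimes\Q$ with $\Gamma \otimes \Q = 0$, which is immediate. No step is a real obstacle.

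Alternatively, this can be phrased dually. If $\lambda \in \Q^k$ annihilates all $w_j$, $j\in J$, then $(\langle \lambda, w_1\rangle, \dots, \langle\lambda,w_r\rangle)$ is orthogonal to $\im(P^T)$, so it lies in $\ker P$. It is also supported on $I$, and linear independence of the $v_i$, $i\in I$, forces it to vanish. Since the $w_i$ span $\Q^k$, this gives $\lambda = 0$.
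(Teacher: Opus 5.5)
Your proof is correct and is essentially the paper's argument. The paper simply cites Gale duality of $v_1,\dots,v_r$ and $w_1,\dots,w_r$: a family of $v_i$ is linearly independent if and only if the complementary $w_j$ span $\Q^k$. You prove exactly this instance of it directly from the rationalized sequence $0\to\Q^n\to\Q^r\to\Q^k\to 0$, using (as the paper does implicitly) that the $v_i$ lying in $\sigma$ are precisely its ray generators and hence linearly independent.
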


\begin{proof}
Note that $v_1,\ldots,v_r$ in $\Q^n$ and $w_1,\ldots,w_r$ in $\Q^k$
are Gale dual vector configurations.
In particular, linearly independent families of $v_i$
correspond to generating systems of $w_j$.
\end{proof}

\begin{proof}[Proof of Construction~\ref{constr:slice}]
We show $H \cdot Y_\sigma = \hat X(\hat\sigma)$.
Let $1 \le i_1 < \ldots < i_k \le r$ be the
indices with $v_{i_j} \not \in \sigma$.
Then $\hat X(\hat\sigma)$ consists precisely of
the points $x \in \C^r$ with $x_{i_1}, \ldots, x_{i_k}$
non-zero.
Thus $H \cdot Y_\sigma \subseteq \hat X(\hat\sigma)$.
For the reverse inclusion, let $x \in \hat X(\hat\sigma)$
be given.
Consider the $k \times k$ matrix
$W_\sigma = [w_{i_1}, \ldots, w_{i_k}]^T$.
Lemma~\ref{lem:finite} ensures that~$W_\sigma$ is of 
rank~$k$.
Thus, the homomorphism
$\varphi \colon \T^k \to \T^k$ associated
with $W_\sigma$ is surjective
and we find $t \in \T^k $ with
$\varphi(t) = (x_{i_1}^{-1}, \ldots, x_{i_k}^{-1})$.
For $h = (t,1) \in H$, this means
$h \cdot x \in Y_\sigma$, showing $x \in H \cdot Y_\sigma$.

The fact that $X(\sigma)$ equals $Y_\sigma / H_\sigma$
is then obvious.
Moreover, $H_\sigma = H_{\hat x_{\hat \sigma}}$ is
clear, because $\hat x_{\hat \sigma} \in \C^r$ is the
point with coordinates at $i_1, \ldots, i_k$ equal
to one and all others zero.
Finally, we obtain $\mathbb{X}(H_\sigma) = \Cl(X,x_\sigma)$
by applying \cite{ADHL}*{Proposition~1.6.5.2} to the special
case of our toric variety $X$.
\end{proof}

\begin{construction}\label{constr:ev1}
Consider a simplicial fan $\Sigma$ in $\Z^n$ with generator
matrix $P$ and associated degree matrix $Q$.
Let $\sigma \in \Sigma$ be an $n$-dimensional cone
and let $1 \le i_1 < \ldots < i_k \le r$ be the
indices with $v_{i_j} \not \in \sigma$. These give
us matrices:
$$
W_\sigma \ := \ \begin{bmatrix} w_{i_1} \ldots w_{i_k} \end{bmatrix}^T,
\qquad
E_\sigma \ := \ \begin{bmatrix} \eta_{i_1} \ldots \eta_{i_k} \end{bmatrix}^T.
$$
where $\omega_i = (w_i,\eta_i) \in K = \Z^k \times \Gamma$ is the $i$-th column
of $Q$. For any $\eta \in \Gamma$, we write $\eta = (e_1, \ldots, e_s)$
with $0 \le e_l <  \mu_l$ and set $b(\eta) = (e_1/\mu_1,\ldots,e_s/\mu_s)$.
Then we obtain a monomorphism
$$
\begin{array}{rcl}
\Phi \colon \Z^k/\mathrm{im}(W_\sigma) \times \Gamma & \to & \Q^k/\Z^k \times \Q^s/\Z^s,
\\[5pt]                                                     
(c,\eta) & \mapsto & \left(W_\sigma^{-1} \cdot (c-E_\sigma \cdot b(\eta)), \, b(\eta) \right).
\end{array}
$$
\end{construction}

\begin{proof}
By Lemma~\ref{lem:finite}, the matrix $W_\sigma$ has rank $k$,
hence it is invertible over~$\Q$.
Furthermore, replacing $c$ with $c+W_\sigma u$, where $u\in\Z^k$,
does not change the class of the first component in $\Q^k/\Z^k$.
Hence $\Phi$ is well defined. Clearly $\Phi$ is a homomorphism.
Finally, suppose $\Phi(c,\eta)=0$. Then $\eta=0$, and,
choosing a representative $c' \in \Z^k$ of~$c$, we infer
$c=0$ from
$$
W_\sigma^{-1} \cdot c'  = W_\sigma^{-1}\cdot \left(c'-E_\sigma \cdot b(\eta)\right) = 0 \in \Q^k/\Z^k.
$$
\end{proof}

\begin{proposition}\label{prop:eigenvals}
Consider the setting of Construction~\ref{constr:ev1}.   
With the monomorphism $\Phi$ we obtain an isomorphism of groups
$$
\begin{array}{rcl}
\mathrm{im}(\Phi) & \to & H_\sigma,
\\[5pt]                                                     
(a,b) & \mapsto & h(a,b) := (\exp(2 \pi i a_1), \ldots, \exp(2 \pi i a_k), \, \exp(2 \pi i b_1), \ldots, \exp(2 \pi i b_s)).
\end{array}
$$
Denote by $\alpha_j(a,b)$ the fractional parts of $\omega_{m_j} \cdot (a,b)$,
where $1 \le m_1 < \ldots < m_n \le r$ are the indices with $v_{m_j} \in \sigma$.
Then the eigenvalues of $h(a,b) \in H_\sigma$ acting on~$Y_\sigma \cong \C^n$ are
$$
\exp(2 \pi i \alpha_1(a,b)), \ldots, \exp(2 \pi i \alpha_n(a,b)).
$$
\end{proposition}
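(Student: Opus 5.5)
The plan is to identify $H_\sigma$ directly as a subgroup of $H$, and then match it with $\mathrm{im}(\Phi)$.

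\textbf{Step 1: the map $h$ is an injective homomorphism into $H$.} Every element of $H=(\C^*)^k\times C(\mu_1)\times\dots\times C(\mu_s)$ can be written as $(\exp(2\pi i a),\exp(2\pi i b))$ with $a\in\C^k$ and $b\in\frac{1}{\mu_1}\Z\times\dots\times\frac1{\mu_s}\Z$. On the torsion subgroup we may take $a\in\Q^k$, and then $a$ is determined modulo $\Z^k$. Hence $h$, defined on $\Q^k/\Z^k\times\Q^s/\Z^s$ (restricted to admissible $b$), is an injective homomorphism. The $b$-components of $\mathrm{im}(\Phi)$ are of the form $b(\eta)$ and are therefore admissible.

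\textbf{Step 2: compute the action of $h(a,b)$.} The element $h(a,b)$ acts on the coordinate $z_i$ by the scalar
$$t^{w_i}\zeta^{\eta_i}=\exp\bigl(2\pi i(\langle w_i,a\rangle+\langle \eta_i,b\rangle)\bigr)=\exp(2\pi i\,\omega_i\cdot(a,b)).$$
Here $\langle\eta_i,b\rangle$ is well defined modulo $\Z$, because $b_l\in\frac1{\mu_l}\Z$.

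\textbf{Step 3: determine the stabilizer.} Since $H$ acts diagonally, $g\in H$ stabilizes $Y_\sigma$ exactly when $g$ acts trivially on the coordinates $i_1,\dots,i_k$. Indeed, such $g$ fixes the condition $x_{i_j}=1$ and preserves the other coordinates' freedom; conversely, if $g Y_\sigma=Y_\sigma$, applying $g$ to any point forces $g$ to act by $1$ on those coordinates.

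Every $g\in H_\sigma$ is torsion, since $H_\sigma$ is finite by Construction~\ref{constr:slice}, so $g=h(a,b)$ with $a\in\Q^k$ and $b=b(\eta)$ for some $\eta\in\Gamma$. The stabilizer condition then reads
$$W_\sigma a+E_\sigma b(\eta)\equiv 0 \mod \Z^k,$$
that is, $W_\sigma a+E_\sigma b(\eta)=c$ for some $c\in\Z^k$. Equivalently, $a=W_\sigma^{-1}(c-E_\sigma b(\eta))$, so $(a,b)=\Phi(c,\eta)$.

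Conversely, for any $(c,\eta)$ the pair $(a,b)=\Phi(c,\eta)$ satisfies this condition. Hence $h$ restricts to a bijection $\mathrm{im}(\Phi)\to H_\sigma$, which is an isomorphism by Step 1. Well-definedness of $a$ modulo $\Z^k$ was already settled in Construction~\ref{constr:ev1}.

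\textbf{Step 4: the eigenvalues.} Identify $Y_\sigma\cong\C^n$ via the coordinates $z_{m_1},\dots,z_{m_n}$. On $Y_\sigma$, $h(a,b)$ acts diagonally in these coordinates by Step 2, with eigenvalues $\exp(2\pi i\,\omega_{m_j}\cdot(a,b))=\exp(2\pi i\alpha_j(a,b))$. Replacing $\omega_{m_j}\cdot(a,b)$ by its fractional part is harmless.

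\textbf{Main obstacle.} The only delicate point is bookkeeping in Step 3. One must use $b=b(\eta)$ with the specific representatives $0\le e_l<\mu_l$ so that $\Phi$ is compatible, and one must confirm that the torsion part of $(\C^*)^k$ corresponds exactly to rational $a$. Both follow from finiteness of $H_\sigma$.
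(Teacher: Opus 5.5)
Your proof is correct and follows essentially the same route as the paper. Both write the finite-order elements of $H$ uniquely as $h(a,b)$ and observe that $h(a,b)$ scales $z_l$ by $\exp(2\pi i\,\omega_l\cdot(a,b))$. Both then characterize $H_\sigma$ by integrality of $c:=W_\sigma a+E_\sigma b$, so that $(a,b)=\Phi(c,\eta)$, and read off the eigenvalues on the coordinates $z_{m_1},\dots,z_{m_n}$ of $Y_\sigma$. One small advantage of your formulation is that it exhibits $\mathrm{im}(\Phi)=h^{-1}(H_\sigma)$ as the preimage of a subgroup under the injective homomorphism $h$, so the group structure on $\mathrm{im}(\Phi)$ and the isomorphism claim do not depend on $\Phi$ itself being additive.
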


\begin{proof}
We begin with a short preparation.
Write the transposed degree matrix as $Q^T= [W,E]$, where $W$
(resp. $E$) has the upper $k$ (resp. lower $s$) rows of $Q$ as
their columns. Denote by $\Q_\mu^s=\Q_{\mu_1}\times\dots\times\Q_{\mu_s}\subset\Q^s$ the subgroup of all tuples $(b_1/\mu_1,\dots,b_s/\mu_s)$ with $b_i\in\Z$. Consider now the homomorphism 
$$
\Psi \colon
\Q^k/\Z^k \times \Q_\mu^s/\Z^s \ \to \ \Q^r/\Z^r,
\qquad
(a,b) \ \mapsto \ Q^T \cdot (a, b) =  W \cdot a + E \cdot b.
$$  
Then $\Psi$ tells us how the elements $h = (t,\zeta)$ of
finite order in $H$ act on $\C^r$.
Any such element can be written uniquely as 
$$
h
\ = \
(t,\zeta)
\ = \
\left(
\exp(2 \pi i a_1), \ldots, \exp(2 \pi i a_k),
\exp(2 \pi i b_1), \ldots, \exp(2 \pi i b_s)
\right),
$$
where $(a_1,\ldots, a_k) \in \Q^k/\Z^k$ and $(b_1,\ldots, b_s) \in \Q_\mu^s/\Z^s$.
With the rows $(w_l,\eta_l)$ of the transpose $Q^T$ and the class of
$\mathfrak{a}_l := (w_l,\eta_l) \cdot (a,b)$ in $\Q/\Z$ we have
$$
h \cdot z
\ = \
(t^{w_1}\zeta^{\eta_1}z_1, \ldots, t^{w_r}\zeta^{\eta_r}z_r)
\ = \
\left(\exp(2 \pi i\mathfrak{a}_1)z_1, \ldots, \exp(2 \pi i \mathfrak{a}_r)z_r \right).
$$

Now, by Construction~\ref{constr:slice}, the group $H_\sigma \subseteq H$
consists precisely of the finite order elements of $H$ having eigenvalue
$1$ at the coordinates $l$ with $v_l \not \in \sigma$.
Using the definitions of $\Phi$ and $\Psi$, we see that any $h(a,b)$ with $(a,b)\in\im(\Phi)$
is actually an element of~$H_\sigma$.
Obviously, $\Phi$ is injective.
For surjectivity, let $h = (t,\zeta) \in H_\sigma$ be given.
Write $t_l=\exp(2\pi i a_l)$ and  $\zeta_l=\exp(2\pi i b_l)$
with $a_l \in \Q/Z$ and $b_l \in \Q_{\mu_l}/\Z$.
We claim
$$
c := W_\sigma \cdot a + E_\sigma \cdot b \in \Z^k.
$$
Indeed, as $h$ has eigenvalue 1 at the coordinates $l$
with $v_l \not \in \sigma$, the corresponding exponents
$\mathfrak{a}_l = (w_l,\eta_l) \cdot (a,b)$ must be integral.
Thus,  $c_l=\mathfrak{a}_l$ gives the claim.
Finally, using again the above presentation of $h \cdot z$ for
$z \in \C^r$, we see that $H_\sigma$ acts on the affine slice
$Y_\sigma$ as claimed in the assertion, with $\alpha_j=\mathfrak{a}_{m_j}$.
\end{proof}

\begin{example}\label{ex:2}
Consider again the toric surface $X$ from Example~\ref{ex:1}.
Recall that we have $\Cl(X) = K = \Z\times\Z/3\Z$ and
generator and degree matrix are given by
$$
P=\begin{bmatrix}
1 & 1 & -2
\\
0 & 3 & -3
\end{bmatrix},
\qquad\qquad
Q
=
\begin{bmatrix}
1 & 1 & 1
\\
\bar{0} & \bar{1} & \bar{2}
\end{bmatrix}.
$$
We go through Construction~\ref{constr:ev1} with
the maximal cone $\sigma=\cone(v_1,v_2)$.
The corresponding matrices arise from the third
column of $Q$:
\[
W_\sigma=\begin{bmatrix}1\end{bmatrix},
\qquad
E_\sigma=\begin{bmatrix}\bar{2}\end{bmatrix}.
\]
In particular, the factor group $\Z/\im(W_\sigma)$ is trivial
and $\Cl(X,x_\sigma)=\Gamma=\Z/3\Z$.
The homomorphism from~\ref{constr:ev1} is explicitly given
as 
\[
\Phi\colon \Gamma \to \Q/\Z\times \Q_3/\Z,
\qquad
\eta \mapsto (-2b(\eta),\,b(\eta)).
\]
We determine the eigenvalues of $\eta=\bar{1} \in\Gamma=\Z/3\Z$.
Since $b(\eta)=1/3$, we obtain
\[
(a,b)
:=
\Phi(0,\bar{1})
=
\left(-\tfrac{2}{3},\tfrac{1}{3}\right)
=
\left(\tfrac{1}{3},\tfrac{1}{3}\right)
\in
\Q/\Z\times \Q_3/\Z.
\]
Now we determine the weights of the action of $H_\sigma$ on
$\C^3$.
The fractional parts $\alpha_l$ of the scalar products
$(w_l,\eta_l) \cdot (a,b)$ for $l=1,2,3$ are
\[
\alpha_1 = (1,\bar 0) \cdot (a,b) = 0+\tfrac{1}{3} = \tfrac{1}{3} \in \Q/\Z,
\]
\[
\alpha_2 = (1,\bar 1) \cdot(a,b) = \tfrac{1}{3}+\tfrac{1}{3} = \tfrac{2}{3} \in \Q/\Z,
\]
\[
\alpha_3 = (1,\bar 2) \cdot(a,b) = \tfrac{1}{3}+\tfrac{2}{3} = 0 \in \Q/\Z.
\]
Thus, according to Proposition~\ref{prop:eigenvals}, the generator $\zeta = \exp(2\pi i/3)$
of $H_\sigma = C(3)$ acts on $\C^3$ via
\[
\zeta \cdot z = (\zeta z_1, \zeta^2 z_2, z_3).
\]
The affine slice at $\hat x_{\hat \sigma} = (0,0,1)$ is the
plane $Y_\sigma = (0,0,1) + V(z_3)$
and~$\zeta$ (as well as~$\zeta^2$) has the eigenvalues
\[
\exp(2\pi i/3),\qquad \exp(4\pi i/3).
\]
The quotient $\hat X({\hat \sigma}) / H = Y_\sigma/ H_\sigma$ has
an $A_2$-singularity and, in particular, is Gorenstein.
The same holds for the remaining two maximal cones.
\end{example}

Recall that the \emph{age} of an element $g \in \GL(n,\C)$ with
eigenvalues $\exp(2\pi i\alpha_j)$, where $j = 1, \ldots, n$
and $0\le \alpha_j<1$, is the sum $\age(g)\coloneqq \alpha_1+\dots+\alpha_n$.
Here comes the main result of this section:

\begin{theorem}\label{thm:main}
Let $X$ be the toric variety arising from a simplicial fan~$\Sigma$
in $\Z^n$ and let $\sigma \in \Sigma$ be an $n$-dimensional cone.
Consider the affine slice $Y_\sigma$ and the action of its stabilizer
$H_\sigma$ from Construction~\ref{constr:slice}. Then
\begin{enumerate}
\item
$x_\sigma \in X$ is canonical if and only if $\age(h) \ge 1$ for all $1 \ne h \in H_\sigma$,
\item
$x_\sigma \in X$ is terminal if and only if $\age(h) > 1$ for all $1 \ne h \in H_\sigma$.
\end{enumerate}
\end{theorem}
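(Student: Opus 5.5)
The plan is to reduce the statement to the Reid--Shepherd-Barron--Tai criterion \cite{MS84}. That criterion says the following: if $G \subseteq \GL(n,\C)$ is a finite group containing no quasi-reflections, then $\C^n/G$ is canonical (resp.\ terminal) at the image of the origin if and only if $\age(g) \ge 1$ (resp.\ $>1$) for all $1 \ne g \in G$.

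By Construction~\ref{constr:slice} we have $X(\sigma) = Y_\sigma/H_\sigma$, where $Y_\sigma \cong \C^n$ and $H_\sigma$ is finite. Proposition~\ref{prop:eigenvals} shows that $H_\sigma$ acts diagonally on $Y_\sigma$ with the eigenvalues $\exp(2\pi i \alpha_j)$. Moreover, $x_\sigma$ is the image of the point $\hat x_{\hat\sigma}$, which corresponds to the origin of $Y_\sigma$. So the ages appearing in the theorem are exactly the ages of the linear action of $H_\sigma$ on $\C^n$. What remains is to check that the RST criterion applies here.

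\textbf{Faithfulness.} First I will show that $H_\sigma$ acts faithfully on $Y_\sigma$. If $h \in H_\sigma$ acts trivially on $Y_\sigma$, then it has eigenvalue $1$ at every coordinate $z_{m_j}$. Since $h$ lies in $H_\sigma$, it also has eigenvalue $1$ at the remaining coordinates. Hence $h$ acts trivially on $\C^r$. The $H$-action on $\C^r$ is faithful, because the weights $\omega_1,\dots,\omega_r$ generate $K$. Therefore $h=1$, and $H_\sigma$ embeds into $\GL(n,\C)$ as a diagonal group.

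\textbf{Absence of quasi-reflections.} This is the main obstacle. Suppose $1 \ne h \in H_\sigma$ is a quasi-reflection. Then exactly one of its eigenvalues on $Y_\sigma$, say the one at $z_{m_j}$, differs from $1$. By the argument above, $h$ then acts trivially on all coordinates of $\C^r$ except $z_{m_j}$. This means the character of $K$ defined by $h$ vanishes on all $\omega_i$ with $i \ne m_j$. Since $Q$ is almost free, these $r-1$ columns generate $K$, so the character is trivial and $h=1$, a contradiction. Here I use that $H \cong \mathrm{Hom}(K,\C^*)$ and that $h$ acts on $z_i$ by the value of the character at $\omega_i$.

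\textbf{Conclusion.} Applying RST to the diagonal group $H_\sigma \subseteq \GL(n,\C)$ gives both parts of the theorem. Canonicity and terminality are local properties, and $x_\sigma$ lies in the open affine chart $X(\sigma)$, so the local statement at $x_\sigma$ is the same as the statement for $Y_\sigma/H_\sigma$ at the image of $0$. I also need the standard fact that the RST criterion characterizes the singularity at the image of the origin. This is how \cite{MS84} formulates it, via the discrepancy of toric or quotient valuations; equivalently, for abelian $G$ one can use the fact that every point of $\C^n/G$ has a neighbourhood isomorphic to $(\text{smooth}) \times \C^m/G'$ for a stabilizer subgroup $G' \subseteq G$.
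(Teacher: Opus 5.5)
Your proposal is correct and follows the paper's proof: you pass to $X(\sigma)=Y_\sigma/H_\sigma$, read off the ages from Proposition~\ref{prop:eigenvals}, and apply the Reid--Shepherd-Barron--Tai criterion after ruling out quasi-reflections via almost freeness of $Q$. The paper phrases this last step through the fixed coordinate subspace $V^h$ of dimension $r-1$, while you phrase it through a character of $K$ that is trivial on $r-1$ generating columns; this is the same argument, and your explicit faithfulness check is a small point the paper leaves implicit.
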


\begin{proof}
Construction~\ref{constr:slice} tells us that the affine toric
chart $X(\sigma)$ hosting the distinguished point $x_\sigma$
is isomorphic to the quotient $Y_\sigma/H_\sigma$, with the slice
$Y_\sigma \cong \C^n$ and the finite abelian group $H_\sigma$.
By Proposition~\ref{prop:eigenvals}, the eigenvalues of $h \in H_\sigma$
acting on $Y_\sigma$ are given as
\[
\exp(2\pi i\alpha_1(a,b)),\dots,\exp(2\pi i\alpha_n(a,b)),
\]
where the $\alpha_l(a,b)$ can be extracted from the degree matrix
and a suitable presentation of $h \in H_\sigma$.
In particular we have
$$
\age(h)=\alpha_1(a,b)+\dots+\alpha_n(a,b).
$$

The idea is to apply the Reid--Shepherd-Barron--Tai criterion~\cite{MS84},
which directly yields the assertion, provided that no
$1\neq h \in H_\sigma$ is a quasi-reflection on $Y_\sigma$.
Let us verify the latter.
Any $h \in H_\sigma \subseteq H$ acts diagonally on the whole
$\C^r$ and its fixed point set is a coordinate subspace $V^h \subseteq \C^r$.
Moreover, from $h \in H_\sigma$ and the definition of $H_\sigma$
we infer
$$
\hat x_{\hat\sigma} \in V(z_l; \ v_l \in \sigma) \subseteq V^h.
$$
Now assume that $h$ is a quasi-reflection on $Y_\sigma \cong \C^n$.
Then the fixed point set $Y_\sigma^h \subseteq Y_\sigma$
is an affine subspace of dimension $n-1$ and by the
definition of $Y_\sigma$, we have
$$
Y_\sigma^h \subseteq Y_\sigma = \hat x_{\hat\sigma} + V(z_l; \ v_l \not\in \sigma).
$$
Thus, $Y_\sigma^h - \hat x_{\hat\sigma}$ is a vector subspace
of dimension $n-1$ in $V(z_l; \ v_l \not\in \sigma)$ consisting
of $h$-fixed points.
Consequently, the fixed point set $V^h \subseteq \C^r$ of $h$ is
a coordinate subspace of dimension $n-1 + k = r-1$.
This contradicts the almost freeness of~$Q$, which ensures
that $H$ acts freely outside the union of all
coordinate subspaces of dimension $r-2$.
\end{proof}

\begin{corollary}
Let $X$ be a $\Q$-factorial complete toric variety arising from a
fan~$\Sigma$.
\begin{enumerate}
\item
$X$ is canonical if and only if for all maximal cones
$\sigma\in\Sigma$ every non-trivial $h \in H_\sigma$
satisfies $\age(h)\ge 1$.
\item
$X$ is terminal if and only if for all maximal cones
$\sigma\in\Sigma$ every non-trivial $h \in H_\sigma$
satisfies $\age(h) > 1$.
\end{enumerate}
\end{corollary}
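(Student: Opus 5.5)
The plan is to reduce the global statement to the local criterion of Theorem~\ref{thm:main}, applied at each maximal cone. Canonicity and terminality are local properties: $X$ is canonical (resp. terminal) exactly when every point of $X$ has a canonical (resp. terminal) neighbourhood. Since $\Sigma$ is simplicial and complete, the affine charts $X(\sigma)$, with $\sigma$ running through the maximal cones, cover $X$. Each maximal cone is $n$-dimensional, because a complete fan has all its maximal cones full-dimensional. So it suffices to show that the chart $X(\sigma)$ is canonical (resp. terminal) if and only if the distinguished point $x_\sigma$ is.

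One direction of this is trivial. For the other, I would use that $X(\sigma)$ is an affine toric variety whose unique closed torus orbit is $\{x_\sigma\}$. Every point $x \in X(\sigma)$ can then be moved by the torus arbitrarily close to $x_\sigma$: the closure of $\T\cdot x$ contains $x_\sigma$. The locus of non-canonical (resp. non-terminal) points is closed and torus-invariant, since discrepancies are computed on a torus-equivariant resolution and the torus acts by automorphisms. If $x$ were non-canonical, the closed invariant set of non-canonical points would contain $\overline{\T\cdot x} \ni x_\sigma$, so $x_\sigma$ would be non-canonical too. Hence $x_\sigma$ canonical forces all of $X(\sigma)$ canonical, and the same argument works for terminality.

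Alternatively, and more concretely, I would argue on the quotient side. By Construction~\ref{constr:slice}, $X(\sigma)\cong Y_\sigma/H_\sigma$ with $Y_\sigma\cong\C^n$, and the proof of Theorem~\ref{thm:main} shows that $H_\sigma$ contains no quasi-reflections. The Reid--Shepherd-Barron--Tai criterion \cite{MS84}, applied to this quotient, characterizes canonicity and terminality of the whole quotient $\C^n/H_\sigma$, not just of the image of the origin. It therefore already yields that $X(\sigma)$ is canonical (resp. terminal) iff $\age(h)\ge1$ (resp. $>1$) for all $1\ne h\in H_\sigma$.

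Combining these, $X$ is canonical iff every chart $X(\sigma)$ with $\sigma$ maximal is canonical. By Theorem~\ref{thm:main}, together with the reduction to $x_\sigma$ above, this holds iff the age condition is satisfied for all $1 \ne h \in H_\sigma$ and all maximal $\sigma$. The terminal case is identical with strict inequalities.

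The only point needing care is the passage from the distinguished point to the whole chart. I expect to settle it by invoking the RST criterion for the entire quotient $\C^n/H_\sigma$, since this avoids discussing the closedness of the non-canonical locus.
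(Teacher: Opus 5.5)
Your proposal is correct and follows the paper's proof. Both cover $X$ by the charts $X(\sigma)$ of the full-dimensional maximal cones, use that each $X(\sigma)$ is canonical (terminal) if and only if $x_\sigma$ is, and then apply Theorem~\ref{thm:main}. The paper just asserts the chart-versus-point equivalence as a fact, and either of your justifications fills it in correctly: the closed $\T$-invariant non-canonical locus would contain $x_\sigma$, which lies in every orbit closure, or one reads the Reid--Shepherd-Barron--Tai criterion as a statement about all of $Y_\sigma/H_\sigma \cong X(\sigma)$.
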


\begin{proof}
This follows from Theorem~\ref{thm:main} and the facts
that $X$ is covered by the affine toric charts $X(\sigma)$
stemming from the maximal cones $\sigma \in \Sigma$
and each such $X(\sigma)$ is canonical (terminal)
if and only if $x_\sigma$ is canonical (terminal).
\end{proof}

\begin{example}\label{ex:3}
Consider once more the toric surface $X$ from Example~\ref{ex:1}.
Due to Example~\ref{ex:2}, all nontrivial elements of $H_\sigma$,
where $\sigma$ runs through the three maximal cones, are of age 1.
Thus $X$ is canonical (as any Gorenstein toric surface), but not terminal.
\end{example}

\section{Classifying fake weighted projective spaces}\label{sec:fwps}

We focus on the case that our toric variety $X$ is $\Q$-factorial, projective and
of Picard number one. These varieties are called \emph{fake weighted projective spaces}
(fwps), and each of them is Fano, i.e., the anticanonical divisor is ample.
The main result of the section is Algorithm~\ref{algo:classification}, which
classifies the canonical fwps in a given dimension.

\begin{setting}\label{setting-sec-2}
For an fwps $X$ of dimension $n$, our notation and shape of the
$n \times (n+1)$ generator matrix and the degree matrix throughout
the section are:
$$
P=\begin{bmatrix}
v_0 & \dots & v_n
\end{bmatrix},
\qquad
Q
=
\begin{bmatrix}
w_0 & \dots & w_n
\\
\eta_0 & \dots & \eta_n
\end{bmatrix},
\quad
w_0 \le \ldots \le w_n \in \Z_>0, \ \eta_i\in \Gamma,
$$
where $\Gamma = \Z/\mu_1\Z\times\dots\times\Z/\mu_s\Z$ with $\mu_s\geq 2$ and
$\mu_s\mid\mu_{s-1},\dots,\mu_2\mid\mu_1$.
The fan $\Sigma$ of $X$ has the maximal cones $\sigma_i=\cone(v_j;\ j\neq i)$,
where $i=0,\dots,n$.
\end{setting}

We specialize Theorem~\ref{thm:main} to fwps and, as it turns out to be
useful for computational purposes, formulate the characterizing conditions
for canonicity and terminality entirely in terms of integers.

\begin{theorem}\label{thm:fwps}
Let $X$ be an $n$-dimensional fwps.
For $0 \le i \le n$, $0 \le c \le w_i-1$ and~$b=(b_1/\mu_1,\dots,b_s/\mu_s)$
with~$0\le b_\ell\le \mu_\ell-1$ such that $(c,b)\neq(0,0)$ let
$R_{ij}(c,b)$ denote the smallest non-negative integer with
$$
R_{ij}(c,b)=\mu_1\bigl(w_jc+(w_i\eta_j-w_j\eta_i)\cdot b\bigr)\mod \mu_1w_i.
$$
Then $X$ is canonical if and only if $\Sigma_{j\neq i}R_{ij}(c,b) \ge \mu_1w_i$
for all $i, c, b$ as above
and~$X$ is terminal if and only if $\Sigma_{j\neq i}R_{ij}(c,b) > \mu_1w_i$
for all $i, c, b$ as above.
\end{theorem}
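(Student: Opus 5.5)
We specialize Theorem~\ref{thm:main} and Proposition~\ref{prop:eigenvals} to the setting of an fwps, where $k=1$. Fix a maximal cone $\sigma_i$, so the only ray not in $\sigma_i$ is $v_i$. Then $W_{\sigma_i}=[w_i]$ is a $1\times 1$ matrix, $E_{\sigma_i}=[\eta_i]$, and the domain of $\Phi$ in Construction~\ref{constr:ev1} is $\Z/w_i\Z\times\Gamma$. Its elements are represented exactly once by pairs $(c,b)$ with $0\le c\le w_i-1$ and $b=b(\eta)$, $0\le b_\ell\le\mu_\ell-1$. By Proposition~\ref{prop:eigenvals}, $(c,b)\mapsto h(\Phi(c,b))$ is a bijection onto $H_{\sigma_i}$, and the nontrivial elements correspond to $(c,b)\neq(0,0)$.

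Next we compute the image explicitly. We have
\[
\Phi(c,\eta)=\Bigl(\tfrac{c-\eta_i\cdot b}{w_i},\, b\Bigr),
\]
where $\eta_i\cdot b$ means $\sum_\ell \eta_{\ell i}b_\ell$, computed with any integer representatives of the $\eta_{\ell i}$. This is well defined modulo $\Z$ because $b_\ell\in\frac1{\mu_\ell}\Z$ and $\eta_{\ell i}$ is only defined modulo $\mu_\ell$. For $j\neq i$, the exponent attached to the coordinate $j$ is
\[
\alpha_j=\Bigl\{ w_j\tfrac{c-\eta_i\cdot b}{w_i}+\eta_j\cdot b \Bigr\}
=\Bigl\{\frac{w_jc+(w_i\eta_j-w_j\eta_i)\cdot b}{w_i}\Bigr\}.
\]
Multiplying numerator and denominator by $\mu_1$ makes everything integral, since $\mu_\ell\mid\mu_1$ gives $\mu_1 b_\ell\in\Z$. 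Hence $\alpha_j=R_{ij}(c,b)/(\mu_1w_i)$, where $R_{ij}(c,b)$ is the least nonnegative residue of $\mu_1\bigl(w_jc+(w_i\eta_j-w_j\eta_i)\cdot b\bigr)$ modulo $\mu_1 w_i$.

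We must check that this residue does not depend on the choice of representatives of $\eta$. Changing $\eta_{\ell j}$ by $\mu_\ell$ changes $\mu_1 w_i\eta_{\ell j}b_\ell$ by $\mu_1 w_i\cdot(\mu_\ell b_\ell)$, which is a multiple of $\mu_1 w_i$ because $\mu_\ell b_\ell\in\Z$. Changing $\eta_{\ell i}$ by $\mu_\ell$ likewise changes the expression by a multiple of $\mu_1 w_j(\mu_\ell b_\ell)$. This is an integer multiple of $\mu_1$, but it need not be a multiple of $\mu_1 w_i$. The first-component formula already absorbs this ambiguity: shifting $\eta_{\ell i}$ by $\mu_\ell$ shifts $c$ by an integer, which is a legitimate change of representative in $\Z/w_i\Z$. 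So the expression is well defined as a function on the group, although the specific pair $(c,b)$ depends on the chosen representative $\eta_i$. Since $(c,b)$ ranges over all of $\Z/w_i\Z\times\Gamma$, the set of residue vectors is independent of this choice. This bookkeeping is the main subtle point, and I would state it explicitly.

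Finally, $\age(h)=\sum_{j\ne i}R_{ij}(c,b)/(\mu_1 w_i)$. By Theorem~\ref{thm:main} applied to each $\sigma_i$, together with the subsequent Corollary, since $X$ is covered by the charts $X(\sigma_i)$, canonicity is equivalent to $\sum_{j\neq i}R_{ij}(c,b)\ge\mu_1 w_i$ for all $i$ and all $(c,b)\neq(0,0)$. Terminality is equivalent to the same condition with strict inequality.
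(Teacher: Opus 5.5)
Your proposal is correct and takes the same route as the paper. You specialize Construction~\ref{constr:ev1} and Proposition~\ref{prop:eigenvals} to $W_{\sigma_i}=[w_i]$ and $E_{\sigma_i}=[\eta_i]$, identify the eigenvalue exponents as $R_{ij}(c,b)/(\mu_1 w_i)$, and apply Theorem~\ref{thm:main} chart by chart. You also check that the choice of integer representatives of $\eta_i$ is absorbed by reindexing $c$, so only the bijection $(c,b)\mapsto h$ sending $(0,0)$ to $1$ matters; the paper leaves this detail implicit.
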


\begin{proof}
First note that for the maximal cone $\sigma_i=\cone(v_j;\ j\neq i)$,
the associated matrices $W_{\sigma_i}$ and $E_{\sigma_i}$ from
Construction~\ref{constr:ev1} are just the integer $w_i$
and the column vector $\eta_i$.
Moreover, Construction~\ref{constr:ev1} and Proposition~\ref{prop:eigenvals},
uniquely represent the elements of $H_{\sigma_i}$ as $h =h(a,b)$ with
$$
a=\frac{1}{w_i}(c-\eta_i\cdot b)\in\Q/\Z
$$
and $c$, $b$ as in the assertion.
For $j \neq i$, Proposition~\ref{prop:eigenvals} shows that
the corresponding eigenvalue of $h=h(a,b)$ on $Y_{\sigma_i}$ has
fractional part
\[
A_{ij}(a,b)
\coloneqq
\fpart{w_ja+\eta_j\cdot b}
=
\fpart{\tfrac{w_j}{w_i}c+\tfrac{1}{w_i}(w_i\eta_j-w_j\eta_i)\cdot b}.
\]
We conclude $\mathrm{age}(h) = \Sigma_{j\neq i} A_{ij}(a,b)$.
Moreover, $R_{ij}(c,b)=\mu_1w_iA_{ij}(a,b)$ holds.
Together with Theorem~\ref{thm:main}, this yields the
assertion.
\end{proof}

The following observation gives us in particular easy-to-compute
necessary criteria for canonicity and terminality for a degree matrix
by applying Theorem~\ref{thm:fwps} to submatrices obtained
by removing rows.
By a \emph{canonical (terminal) degree matrix} we mean the degree
matrix of a canonical (terminal) fwps.

\begin{proposition}\label{prop:partial-rows}
Let $Q$ be a canonical (terminal) degree matrix and $Q'$ be the
submatrix obtained from $Q$ by deleting some of the lower $s$ rows.
Then $Q'$ is a canonical (terminal) degree matrix.
\end{proposition}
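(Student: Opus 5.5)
Here is the plan. Deleting some of the lower $s$ rows of $Q$ amounts to passing to the quotient $K \to K' = K/\Gamma''$, where $\Gamma = \Gamma' \times \Gamma''$ and $\Gamma''$ collects the cyclic factors whose rows are deleted. Dually, this corresponds to the subgroup $H' = (\C^*)^k \times \prod_{\ell \text{ kept}} C(\mu_\ell) \subseteq H$, which acts on $\C^r$ via the weights given by the columns of $Q'$.

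The first step is to check that $Q'$ is again a degree matrix of an fwps. The columns of $Q'$ generate $K'$, and any $r-1$ of them still generate $K'$, since images of generators generate. So $Q'$ is almost free. By the standard correspondence between almost free degree matrices and generator matrices (Gale duality, as in Lemma~\ref{lem:finite}), $Q'$ is the degree matrix of a toric variety $X'$ with $\Cl(X') = K'$. The $\Z$-part $w_0,\dots,w_n$ is unchanged and positive, so $X'$ is again an fwps. Geometrically, $X' = \hat X/H'$, and $X$ is the quotient of $X'$ by the finite group $H/H'$, which is unramified in codimension one; this picture is not needed for the argument, however.

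Next we compare the local stabilizers. For each maximal cone $\sigma_i$ the affine slice $Y_{\sigma_i} \subseteq \C^r$ is the same for $X$ and $X'$, since it depends only on the indices. Its stabilizer in $H'$ is $H'_{\sigma_i} = H_{\sigma_i} \cap H'$, a subgroup of $H_{\sigma_i}$. Each element $h \in H'_{\sigma_i}$ acts on $\C^r$ (and hence on $Y_{\sigma_i}$) exactly as it does when regarded as an element of $H$: the character values at $h$ of the weights $\omega_j$ and of their images $\omega'_j$ agree, because the deleted components of $h$ are trivial. Hence the eigenvalues, and so the age, of $h$ on $Y_{\sigma_i}$ are the same in both settings.

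Finally we apply the criterion. If $Q$ is canonical (terminal), then by Theorem~\ref{thm:main} (or its corollary) every $1 \neq h \in H_{\sigma_i}$ has $\age(h) \ge 1$ (resp. $>1$). In particular this holds for every nontrivial element of the subgroup $H'_{\sigma_i}$, so $X'$ is canonical (terminal) by the same theorem. The one point requiring care is the first step: that deleting torsion rows yields a legitimate almost free degree matrix realized by an fwps $X'$. If we want to avoid the general realization argument, we can instead verify the integer conditions of Theorem~\ref{thm:fwps} directly, noting that the tuples $(c,b)$ for $Q'$ embed into those for $Q$ by setting the deleted $b_\ell$ to $0$.
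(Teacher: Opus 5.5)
Your proposal is correct and follows essentially the same approach as the paper. The paper observes that the conditions of Theorem~\ref{thm:fwps} for $Q'$ are those for $Q$ with $b_{k_1}=\dots=b_{k_t}=0$; your passage to the subgroup $H'_{\sigma_i}=H_{\sigma_i}\cap H'$, with unchanged eigenvalues, is the group-level form of that observation, and your fallback is the paper's argument itself. Your check that $Q'$ remains almost free, and so is realized by an fwps, fills in a point the paper leaves implicit.
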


\begin{proof}
Let $Q'$ arise from $Q$ by deleting the rows number $k_1, \dots, k_t$
from the lower~$s$ ones.
Then the characterizing conditions for $Q'$ from Theorem~\ref{thm:fwps} are
precisely the characterizing conditions for $Q$ with
$b_{k_1} = \dots = b_{k_t} = 0$.
\end{proof}

We call the first row $w = (w_0, \ldots,w_n)$ of a degree matrix $Q$
the \emph{weight vector} of $Q$ or of the associated fwps $X$.
By Proposition~\ref{prop:partial-rows}, if $X$ is canonical
(terminal), then the weight vector defines a canonical (terminal)
weighted projective space.

\begin{reminder}\label{rem:bounds}
Let $X$ be a canonical fwps with weight vector $w = (w_0, \ldots,w_n)$ and 
denote by~$s_i$ the $i$-th Sylvester number. Then \cite{AKN}*{Theorem 2.7} and
\cite{Kas1}*{Corollary 2.11} provide us with the following bounds:
$$
w_0 + \ldots + w_n \ \leq \ (s_{n+1}-1)^n,
\qquad
\mu_1\cdots\mu_s \ \leq \ \frac{(w_0 + \ldots + w_n)^{n-1}}{w_1  \cdots w_n}.
$$
The first bound gives a finite search space for the canonical weight
vectors.
One can further restrict by upper bounds on the ratios $w_i/(w_0 + \ldots + w_n)$,
see \cite{Kas1}*{Theorem 3.5}. This way, Kasprzyk classified all canonical
weighted projective spaces in dimension four, see \cite{Kas2}.
\end{reminder}

Let $w=(w_0,\dots,w_n)$ be a weight vector. By a
\emph{$\mu$-torsion vector for $w$} we mean an
$\eta=(\eta_0,\dots,\eta_n) \in (\Z/\mu\Z)^{n+1}$ 
such that the $2 \times (n+1)$ matrix $Q$ with first
row~$w$ and second row $\eta$ is a degree matrix.
In this situation, we call $\eta$
\emph{canonical (terminal)} if $Q$ is so.

\begin{definition}
Let $w = (w_0,\ldots,w_n)$ be a weight vector.
\begin{enumerate}
\item
Two $\mu$-torsion vectors $\eta, \eta'$ for $w$ are \emph{row-equivalent}
if there is an automorphism of $\Z\times\Z/\mu\Z$ sending $(w_i,\eta_i)$
to $(w_i,\eta'_i)$ for $i=0,\dots,n$.
\item
A $\mu$-torsion vector $\eta$ is \emph{row-minimal} if it is minimal w.r.t.
the lexicographical order among all $\mu$-torsion vectors for $w$ being
row-equivalent to $\eta$.
\end{enumerate}
\end{definition}

\begin{lemma}\label{lem:lex_min}
Let $w=(w_0,\dots,w_n)$ be a weight vector and
$\eta=(\eta_0,\dots,\eta_n)$ a $\mu$-torsion vector for $w$.
For $i = 0, \ldots, n$ set
$$
g_{-1}=\mu,\quad g_i=\gcd(\mu,w_0,\dots,w_i),\quad d_i\coloneqq\dfrac{\mu g_i}{g_{i-1}}.
$$
Any $\kappa \in(\Z/\mu\Z)^*$ admits a unique $\mu$-torsion
vector $\eta(\kappa)$ for $w$ row-equivalent to~$\kappa \eta$ with
$0 \le \eta(\kappa)_i < d_i$ for $i=0,\dots,n$.
The following statements are equivalent:
\begin{enumerate}
\item
The $\mu$-torsion vector $\eta$ is row-minimal.
\item
We have $0\leq\eta_i<d_i$ for $i=0,\dots,n$ and $\eta\leq \eta(\kappa)$ for all  $\kappa\in(\Z/\mu\Z)^*$.
\end{enumerate}
\end{lemma}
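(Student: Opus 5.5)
The plan is to make row-equivalence explicit and then run a greedy, coordinate-by-coordinate normalization.

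\emph{Step 1: describe the automorphisms.} Since $\mathrm{Hom}(\Z/\mu\Z,\Z)=0$, an automorphism of $\Z\times\Z/\mu\Z$ has the form $(x,y)\mapsto(\pm x,\ \lambda x+\kappa y)$ with $\lambda\in\Z/\mu\Z$ and $\kappa\in(\Z/\mu\Z)^*$. Because it must fix the positive integers $w_i$, the sign is $+$. Hence $\eta'$ is row-equivalent to $\eta$ if and only if
\[
\eta'=\kappa\eta+\lambda w \quad\text{for some } \kappa\in(\Z/\mu\Z)^*,\ \lambda\in\Z/\mu\Z .
\]
Since the map is an automorphism, every such $\eta'$ is again a $\mu$-torsion vector for $w$. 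So the row-equivalence class of $\eta$ is the union over $\kappa$ of the cosets $\kappa\eta+(\Z/\mu\Z)w$. Throughout, entries of $(\Z/\mu\Z)^{n+1}$ are compared lexicographically via their representatives in $[0,\mu)$.

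\emph{Step 2: existence and uniqueness of $\eta(\kappa)$.} Fix $\kappa$ and normalize $\kappa\eta+\lambda w$ one entry at a time. Suppose entries $0,\dots,i-1$ have already been fixed. The remaining freedom is exactly the set of $\lambda$ with $\lambda w_j\equiv 0 \bmod \mu$ for all $j<i$, namely $\lambda\in(\mu/g_{i-1})\Z/\mu\Z$. As $\lambda$ runs through this set, $\lambda w_i$ runs through the multiples of
\[
\gcd\bigl(\mu,\ (\mu/g_{i-1})\,w_i\bigr)=(\mu/g_{i-1})\gcd(g_{i-1},w_i)=\mu g_i/g_{i-1}=d_i .
\]
This number divides $\mu$. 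Hence the $i$-th entry can be changed by precisely the multiples of $d_i$ without disturbing the earlier entries. So there is exactly one admissible value in $[0,d_i)$, and induction on $i$ gives a unique $\eta(\kappa)$ in the coset $\kappa\eta+(\Z/\mu\Z)w$ with $0\le\eta(\kappa)_i<d_i$. The gcd bookkeeping in this step is the only point requiring care.

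\emph{Step 3: the lexicographic minimum.} The same greedy argument shows that $\eta(\kappa)$ is the lexicographic minimum of the coset $\kappa\eta+(\Z/\mu\Z)w$. Indeed, a lex-minimal element must first minimize entry $0$ over the coset; the reachable values of entry $0$ form a class modulo $d_0$, so the minimum is the representative in $[0,d_0)$. Subject to that choice, it must minimize entry $1$, and so on. Therefore the lex-minimum of the whole row-equivalence class is $\min_\kappa\eta(\kappa)$.

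\emph{Step 4: the equivalence.} For (1)$\Rightarrow$(2): a row-minimal $\eta$ is the minimum of its own coset (take $\kappa=1$), so $\eta=\eta(1)$, which satisfies the bounds $0\le\eta_i<d_i$. Minimality over the class then gives $\eta\le\eta(\kappa)$ for every $\kappa$. For (2)$\Rightarrow$(1): the bounds together with the uniqueness from Step 2 force $\eta=\eta(1)$. Then $\eta\le\eta(\kappa)$ for all $\kappa$ says that $\eta$ equals $\min_\kappa\eta(\kappa)$, which by Step 3 is the lex-minimum of the class; hence $\eta$ is row-minimal.
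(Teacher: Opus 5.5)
Your proposal is correct and follows essentially the paper's route: describe the row-equivalence class as the union of the cosets $\kappa\eta+\lambda w$, normalize entry by entry using $\gcd\bigl(\mu,(\mu/g_{i-1})w_i\bigr)=d_i$, and compare the normalized representatives $\eta(\kappa)$. The differences are only in presentation. You derive the automorphisms of $\Z\times\Z/\mu\Z$ directly instead of citing \cite{Ghi}*{Theorem~2.1}, and you make explicit (Step~3) that $\eta(\kappa)$ is the lexicographic minimum of its coset, which the paper uses tacitly. You also rightly read the uniqueness of $\eta(\kappa)$ as uniqueness within the coset $\kappa\eta+(\Z/\mu\Z)w$, which is exactly what the paper's proof establishes.
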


\begin{proof}
By \cite{Ghi}*{Theorem 2.1}, the $\mu$-torsion vectors row-equivalent to
$\kappa\eta$ are exactly the vectors $\kappa\eta+mw$ with $m\in\Z$.
We fix $\kappa\in(\Z/\mu\Z)^*$ and normalize the coordinates successively via the choice of $m$.
Assume that the first $i-1$ coordinates have already been normalized. Replacing $m$ by
$m+t\,\mu/g_{i-1}$ leaves the first $i-1$ coordinates unchanged and changes the $i$-th coordinate by
multiples of $(\mu/g_{i-1})w_i$. As
\[
\gcd\!\left(\mu,\frac{\mu}{g_{i-1}}w_i\right)=\frac{\mu g_i}{g_{i-1}}=d_i,
\]
there is a choice of $t$ with $0\le \kappa\eta_i+(m+\mu/g_{i-1})w_i<d_i$. By induction on $i$, this gives a unique
normalized representative $\eta(\kappa)$.

Every row-equivalence class contains exactly one such normalized
representative. Therefore $\eta$ is row-minimal if and only if
$\eta$ itself is normalized, i.e. $0\le \eta_i<d_i$ for all $i$,
and is lexicographically minimal among the representatives
$\eta(\kappa)$, that is, $\eta\le \eta(\kappa)$ for all
$\kappa\in(\Z/\mu\Z)^*$.
\end{proof}

We extend the notions of row-minimality and minimality to degree
matrices $Q$ with columns in $\Z \times \Gamma$ as in
Setting~\ref{setting-sec-2}.

\begin{definition}
Consider degree matrices $Q = [\omega_0,\ldots, \omega_n]$
and $Q' = [\omega_0',\ldots, \omega_n']$ with columns in $\Z \times \Gamma$.
\begin{enumerate}
\item
$Q, Q'$ are \emph{row-equivalent} if there is an automorphism
$\varphi$ of $\Z \times \Gamma$ such that $\omega_i' = \varphi(\omega_i)$
for $i = 0, \ldots, n$.
\item
$Q, Q'$ are \emph{equivalent} if they are row-equivalent up to column permutations fixing their (common) weight vector.
\item
$Q$ is \emph{row-minimal} (\emph{minimal}) if, w.r.t. the row-wise
lexicographical order, $Q\leq Q'$ holds for all $Q'$ row-equivalent
(equivalent) to $Q$.
\end{enumerate}
\end{definition}

\begin{remark}
Two degree matrices are equivalent if and only if the associated
fwps are isomorphic to each other.
\end{remark}

\begin{remark}
Theorem~2.1 of \cite{Ghi} provides an explicit set of generators
for the automorphism group of any finitely generated abelian group.
In the case $\Z \times \Gamma$, this allows an efficient encoding
of its automorphism group.
\end{remark}

We are now ready to state our classification algorithm for canonical (terminal) fwps.
The idea is to start with a canonical (terminal) weight vector and to adjoin
torsion rows iteratively, retaining only those extensions that preserve
canonicity (terminality). At each stage, it is enough to test the torsion rows
that survived the previous one.
We denote by $Q_\eta$ the stack matrix obtained from $Q$ by adjoining $\eta$ as
the last row.

\begin{algorithm}\label{algo:classification}
\emph{Input}: a positive integer $n\in\Z_{\geq0}$.

\smallskip

%\noindent\emph{Step 0:}
\noindent\emph{Initialization}
\begin{enumerate}
\item
Compute the set $\mathcal{Q}_0$ of all canonical (terminal) weight
vectors of length~$n+1$ using Reminder~\ref{rem:bounds}.
\item
For each $w\in\mathcal{Q}_0$, compute the set $M(w)$ of all pairs
$(\eta,\mu)$ where $\eta$ is a row-minimal canonical (terminal)
$\mu$-torsion vector for $w$, using Lemma \ref{lem:lex_min}
and Reminder \ref{rem:bounds}.
\end{enumerate}
    
%\noindent\emph{Step $i \ge 1$:}
\noindent\emph{Recursion}
\begin{enumerate}
\item
For all $Q\in\mathcal{Q}_{i-1}$ and all $(\eta,\mu)\in M(Q)$, compute
the set $\mathcal{Q}_i$ of all minimal matrices of the form~$Q_\eta$.
\item For all $Q_\eta\in\mathcal{Q}_i$, compute the set $M(Q_\eta)$ of
all pairs $(\eta',\mu')\in M(Q)$ such that $\mu'\mid\mu$, the bound
of Reminder \ref{rem:bounds} is respected, and the matrix $(Q_\eta)_{\eta'}$
is almost-free, canonical (terminal), and row-minimal.
\item
Terminate the algorithm if $M(Q_\eta)$ is empty for all $Q_\eta\in\mathcal{Q}_i$
or if $i=n-1$.
\end{enumerate}
\noindent\emph{Output:} The set $\bigcup_i\mathcal{Q}_i$ of all minimal
canonical (terminal) degree matrices for $n$-dimensional fwps.
\end{algorithm}

\begin{remark}
To isolate the row-minimal (minimal) matrices in a set $\mathcal{Q}$ we
implement a sieve method.
We iteratively select a matrix in $\mathcal{Q}$ and compute its entire
equivalence class.
We then identify the row-minimal (minimal) representative $Q$, add it
to the target set $\mathcal{Q'}$, and discard from $\mathcal{Q}$ all
matrices row-equivalent (equivalent) to $Q$.
This process is repeated until $\mathcal{Q}$ is empty.
\end{remark}

\begin{proof}[Proof of Algorithm \ref{algo:classification}]
Every matrix output by Algorithm \ref{algo:classification} is minimal
and canonical (terminal).
Conversely, let $Q$ be a minimal canonical (terminal) degree matrix
with $s+1$ rows.
By almost freeness, we have $s \le n-1$. Write $Q_i$ for the submatrix
consisting of the first $i+1$ rows of $Q$. Since $Q$ is minimal, $Q_i$ is also minimal. Hence, by Proposition \ref{prop:partial-rows} and induction on $i$,
we have $Q_i\in\mathcal{Q}_i$ for all $i$. In particular,
$Q = Q_s \in \mathcal{Q}_s$. 
\end{proof}
\section{Fine interiors}\label{sec:fine}

We determine the Fine interiors of all four-dimensional
canonical lattice polytopes.
First recall that the Fine interior of an $n$-dimensional
lattice polytope $\Delta \subseteq \Q^n$, as introduced
in~\cite{Fin}, is the following  
$$
F(\Delta)\coloneqq
\bigcap_{0\neq v\in \Z^n}
\left\{
x \in \Q^n \;\middle|\; \langle x,v\rangle \geq \ord_\Delta(v)+1
\right\},
\qquad
\ord_\Delta(v)\coloneqq \min_{x\in \Delta}\langle x,v\rangle.
$$
The Fine interior is a polytope satisfying
$\Delta^\circ\cap \Z^n \subseteq F(\Delta) \subsetneq \Delta$.
Whereas in dimensions one and two the Fine interior equals
the convex hull over $\Delta^\circ\cap \Z^n$, this is no
longer true in higher dimensions, where $F(\Delta)$ has
rational vertices in general.

The Fine interior plays an important role in the birational
geometry of toric hypersurfaces.
Take any $\Delta \subseteq \Q^n$, let $f$ be a $\Delta$-non-degenerate
polynomial in $n$-variables in the sense of~\cite{Kho},
and consider the  affine hypersurface
$$
Z_\Delta = V(f) \subseteq (\C^*)^n.
$$
Following Batyrev~\cite{Bat17}*{Definition 2.15}, a \emph{canonical model}
of $Z_\Delta$ is a projective normal $\Q$-Gorenstein canonical variety
$\widetilde Z_\Delta$ birational to $Z_\Delta$, such that the
linear system associated with a sufficiently large multiple of
the canonical divisor is base point free.

By~\cite{Bat17}*{Theorem 2.18}, the hypersurface $Z_\Delta$ admits a
canonical model if and only if the Fine interior
$F(\Delta)$ is non-empty.
Now, let $Z_\Delta$ admit a canonical model.
Then \cite{Bat17}*{Theorem~2.23} shows that $\widetilde Z_\Delta$
is birational to a Calabi--Yau variety with at worst
Gorenstein canonical singularities if and only if $F(\Delta)$
consists of a single lattice point.
Moreover, due to \cite{Bat23}*{Theorem~9.2} the Kodaira dimension
of $\widetilde Z_\Delta$ is the minimum of $\dim F(\Delta)$ and $n-1$.

In~\cite{BKS}, Batyrev, Kasprzyk and Schaller computed the Fine
interiors of all canonical Fano polytopes from Kasprzyk's list~\cite{Kas3}.
They found $665599$ canonical polytopes with zero-dimensional Fine
interior, $9040$ with one-dimensional Fine interior, and $49$ with
three-dimensional Fine interior. No canonical polytope of dimension
three has two-dimensional Fine interior.
We did the same with our canonical $4$-simplices:

\begin{proposition}
The Fine interiors of the $710450$ canonical simplices
of dimension four are distributed by dimension as follows:
$$
\begin{array}{c|ccccc}
\dim F(\Delta) & 0 & 1 & 2 & 3 & 4\\
\hline
\#\text{ simplices} & 387310 & 112672 & 95713 & 70130 & 44625
\end{array}
$$
\end{proposition}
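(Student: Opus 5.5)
This is a computational statement, so the proof is an explicit computation on the output of Algorithm~\ref{algo:classification}. The plan has four steps: pass from degree matrices to simplices, write the Fine interior as a finite intersection of halfspaces, compute its dimension exactly, and tabulate.

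\textbf{Step 1: from degree matrices to simplices.} For each of the $710450$ minimal canonical degree matrices $Q$ from Theorem~\ref{classification}, I recover a generator matrix $P$ with $P Q^T$-type Gale duality. Concretely, the columns $v_0,\dots,v_4$ are primitive and span the kernel lattice of the map $\Z^5 \to \Z \times \Gamma$ given by $Q$. I then take $\Delta = \operatorname{conv}(v_0,\dots,v_4)$. By construction $\Delta$ is a canonical lattice simplex, and equivalent degree matrices give unimodularly equivalent simplices.

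\textbf{Step 2: reduce $F(\Delta)$ to a finite intersection.} The defining intersection runs over infinitely many $v$, but only finitely many inequalities are needed. The function $v \mapsto \ord_\Delta(v)$ is linear on each cone of the normal fan of $\Delta$. For a simplex, these are the five cones $\tau_i = \cone(u_j;\ j\ne i)$, where the $u_j$ are the inner facet normals.

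Within one cone $\tau$, write $v = v' + v''$ with $v'$ in the Hilbert basis of $\tau$. Then
\[
\langle x, v\rangle - \ord_\Delta(v) = \bigl(\langle x, v'\rangle - \ord_\Delta(v')\bigr) + \bigl(\langle x, v''\rangle - \ord_\Delta(v'')\bigr).
\]
Each summand is $\ge 0$ on $\Delta$ and $\ge 1$ once its own inequality holds. So it suffices to impose the inequalities for $v$ in the union of the Hilbert bases of the $\tau_i$, which is the standard reduction used in~\cite{BKS}.

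\textbf{Step 3: compute $F(\Delta)$ and its dimension.} I compute these Hilbert bases with Normaliz-type routines, or by enumerating the lattice points of the fundamental parallelepipeds of the simplicial cones $\tau_i$ together with their generators. I then form the rational polyhedron
\[
F(\Delta) = \{x ;\ \langle x, v\rangle \ge \ord_\Delta(v)+1 \text{ for } v \text{ in the Hilbert bases}\},
\]
which is nonempty since $0 \in F(\Delta)$. Its dimension I obtain with exact rational linear programming: determine the implicit equalities and take $4$ minus the rank of the equality system. Equivalently, I can compute the vertices and the affine hull.

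\textbf{Step 4: tabulate and check.} Running this over all simplices and tallying by dimension gives the table. As a consistency check, I recompute with the three-dimensional list and compare with~\cite{BKS}.

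The main obstacle is computational: the Hilbert bases of the normal cones can be large when the multiplicities $\mu_1\cdots\mu_s\cdot\sum w_i$ are big. I would control this by working in the dual lattice cone by cone, parallelizing over simplices as in the classification, and using exact arithmetic throughout so the dimension count is reliable.
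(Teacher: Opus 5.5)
Your proposal is correct and matches the paper's approach, since the proposition is a direct computation: for each of the $710450$ classified simplices $\Delta=\operatorname{conv}(v_0,\dots,v_4)$ you compute $\dim F(\Delta)$ and tally, and your reduction of the defining intersection to Hilbert bases of the normal cones is a valid way to make this finite (it works because $\ord_\Delta$ is linear on each normal cone). One small slip in Step~1: it is the rows of $P$, not its columns, that form a $\Z$-basis of $\ker(Q)=\im(P^T)\subseteq\Z^5$, and the $v_i$ are then the columns of that matrix.
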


Note that, although the zero-dimensional case is still the most frequent,
it is less dominant than in dimension three. Moreover, all dimensions occur.
In the zero-dimensional case, Batyrev gave in \cite{Bat17} a combinatorial
formula for the stringy Euler number $e_{\mathrm{str}}(\widetilde Z_\Delta)$
of the anticanonical model.
Applying this formula to our $4$-simplices, we obtain the following.

\begin{proposition}
For the $387310$ canonical simplices $\Delta$ with zero-dimensional
Fine interior, consider the set of Calabi--Yau models $\widetilde Z_\Delta$.
On this set, the stringy Euler number $e_{\mathrm{str}}(\widetilde Z_\Delta)$
attains $94233$ distinct values, of which only $852$ are integral.
Their frequency distribution is shown in Figure \ref{fig:euler}.
\end{proposition}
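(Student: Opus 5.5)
This is a computational statement, so the plan is to describe a verifiable computation rather than give a conceptual proof. We will work with the list of $387310$ canonical $4$-simplices having zero-dimensional Fine interior, taken from the previous proposition.

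\emph{Step 1: the Calabi--Yau setting.} For each such $\Delta$, \cite{Bat17}*{Theorem~2.23} places us in the Calabi--Yau situation: $F(\Delta)$ is a single lattice point, and since $\Delta$ is canonical that point is the origin. Batyrev's combinatorial formula for $e_{\mathrm{str}}(\widetilde Z_\Delta)$ in \cite{Bat17} then applies. Concretely, we pass to the support $C = \{x \in \Delta;\ \langle x,v\rangle = \ord_\Delta(v) \text{ for } v \text{ with } F(\Delta)\text{-facets}\}$, i.e.\ the canonical hull $C(\Delta)$. In the zero-dimensional case this is the polytope dual to the rational polytope cut out by the inequalities $\langle x,v\rangle\ge \ord_\Delta(v)$ for the finitely many $v$ attaining equality at $F(\Delta)=\{0\}$.

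\emph{Step 2: computing $C(\Delta)$.} For a simplex the only lattice points to check are finitely many. The relevant $v$ are the lattice points of the dual body $\{v;\ \ord_\Delta(v)\ge -1\}$. Since $0$ is interior, this set is a rational polytope whose vertices come from the facets of $\Delta$, so its lattice points can be enumerated directly, and $C(\Delta)$ is their defining intersection. We then evaluate Batyrev's formula: the stringy Euler number is expressed as an alternating sum over faces $\theta$ of the (generally non-lattice) polytope $C(\Delta)$ of normalized volumes weighted by factors coming from the local denominators. We implement it exactly in rational arithmetic, so that $e_{\mathrm{str}}\in\Q$ is computed without rounding.

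\emph{Step 3: tabulation and checks.} We collect the $387310$ rational values, count the distinct ones to obtain $94233$, count those with denominator $1$ to obtain $852$, and produce the histogram of Figure~1.

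\emph{Main obstacle.} The delicate step is the correct implementation of Batyrev's formula for the non-reflexive case, namely the face-lattice enumeration of the rational polytope $C(\Delta)$ and the handling of its non-lattice faces. We will validate the implementation in two ways. First, on the reflexive simplices among our list, the value must reduce to the classical $\sum (-1)^{\dim\theta}\mathrm{vol}(\theta)\mathrm{vol}(\theta^*)$ formula and yield even integers. Second, we will reproduce the three-dimensional values reported in \cite{BKS}.
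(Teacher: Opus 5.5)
Your plan is the same as the paper's: the paper offers no argument beyond applying Batyrev's combinatorial formula for $e_{\mathrm{str}}$ to each of the $387310$ simplices and tabulating the resulting values, and your counting and validation steps fit that. One slip to fix in Step~1: for $F(\Delta)=\{0\}$ the support is the set of lattice vectors $v$ with $\ord_\Delta(v)=-1$, and the canonical hull $C(\Delta)\supseteq\Delta$ is the rational polytope $\{x;\ \langle x,v\rangle\ge -1 \text{ for all such } v\}$ itself, as you correctly say in Step~2; it is neither a subset of $\Delta$ cut out by equalities nor the dual of that polytope, whose dual is instead the lattice polytope spanned by these $v$.
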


The Fine interior determines further birational invariants of the associated
canonical model.
In our setting, there are by far less Fine interiors than we have $4$-simplices.
More precisely, our computations yield the following.

\begin{proposition}\label{prop:fine-int}
The canonical simplices of dimension four give rise, up to unimodular equivalence,
to $46$, $2178$, $7825$, and $16072$ distinct Fine interiors of dimensions $1$,
$2$, $3$, and $4$, respectively.
\end{proposition}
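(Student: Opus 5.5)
This is a computational statement, so the proof is a certified computation over the output of Algorithm~\ref{algo:classification}. I take as input the $710450$ minimal canonical degree matrices $Q$ of Theorem~\ref{classification}. For each $Q$ I recover a generator matrix $P$ as a Gale dual: compute the kernel of $Q$, read over $\Z$ with the torsion rows handled modulo $\mu_\ell$. The columns $v_0,\dots,v_4$ then span the canonical simplex $\Delta=\operatorname{conv}(v_0,\dots,v_4)$. The statement concerns the Fine interior of the polytope whose interior lattice point is the origin; I will use this simplex, or its polar if that is the convention fixed by the Zenodo data. I will make this choice explicit and check consistency with the dimension table of the previous proposition.

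To compute $F(\Delta)$, I use that in the definition only finitely many $v$ matter. It suffices to take the primitive $v$ lying in the Hilbert bases of the cones of the normal fan of $\Delta$ (Batyrev's reduction in \cite{Bat17}). For a simplex the normal fan has five simplicial cones, so the Hilbert bases are cheap to compute, for example via Normaliz. Intersecting the resulting half-spaces $\langle x,v\rangle\ge \ord_\Delta(v)+1$ gives $F(\Delta)$ as a rational polytope, described by its vertices and its dimension.

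Next I bring each Fine interior into a normal form modulo the affine unimodular group $\GL(4,\Z)\ltimes\Z^4$. The difficulty is that $F(\Delta)$ is rational and usually lower-dimensional. I plan to scale by the lcm of the vertex denominators to obtain a lattice polytope. I then apply a normal form that respects the affine hull: compute a lattice basis of the saturated sublattice parallel to $\operatorname{aff} F(\Delta)$, express the vertices in that basis, and take a PALP-type normal form. The invariants recorded are the denominator, the index data of the affine hull, and the normal form itself.

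Finally I deduplicate these keys within each dimension $1,2,3,4$ and count them, expecting $46$, $2178$, $7825$ and $16072$. The main obstacle is to make the equivalence test correct for lower-dimensional rational polytopes. Two Fine interiors can agree intrinsically while sitting differently relative to the ambient $\Z^4$, for example with different lattice distances or different translation classes. The normal form therefore has to encode the embedding, not just the intrinsic shape. I would first try normalizing the full configuration consisting of $F(\Delta)$ together with the sublattice generated by its affine hull, and compare affine unimodular equivalence of the scaled sets directly. As sanity checks, for $\dim 4$ the orbits should refine the orbits of the simplices, and two classes related by a transformation of $\Z^4$ should be spot-checked for equality by hand.
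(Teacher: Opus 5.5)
The paper gives no argument beyond reporting the computation, so the question is whether your pipeline would actually produce these four numbers. Two of your steps are sound. Recovering $P$ from $Q$ by Gale duality works. Computing $F(\Delta)$ from the Hilbert bases of the five normal cones also works: $\ord_\Delta$ is linear on each normal cone, so for $v=\sum c_jh_j$ with $c_j\in\Z_{\ge 0}$ the inequality for $v$ is implied by those for the $h_j$.

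The gap is the equivalence test. You leave it open, and the concrete version you fall back on is wrong: affine unimodular equivalence of the scaled sets $dF(\Delta)$, recorded together with $d$. Scaling by $d$ turns lattice translations of $dF$ into translations of $F$ by $\tfrac1d\Z^4$, and these are not unimodular. The paper's own list shows the effect. The segments $[-1/3,1/3]\times 0$ and $[0,2/3]\times 0$ both have denominator $3$ and both scale to lattice segments of length $2$, yet they are inequivalent. The same happens for $[-1/4,1/2]$ and $[0,3/4]$, for $[-1/2,2/3]$ and $[-1/3,5/6]$, and for $[-1/3,1/2]$ and $[0,5/6]$. Your test would merge these pairs and report $42$ rather than $46$ one-dimensional classes.

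The missing idea, which removes your ``main obstacle'', is that the origin is a distinguished point:
\begin{itemize}
\item Since $\Delta$ is canonical, $0\in\Delta^\circ\cap\Z^4\subseteq F(\Delta)$.
\item Applying the defining inequality to the facet normals gives $F(\Delta)\subseteq\Delta^\circ$.
\item Together these give $F(\Delta)\cap\Z^4=\{0\}$.
\end{itemize}
Hence every affine unimodular map between two Fine interiors fixes $0$ and lies in $\GL(4,\Z)$. Moreover $\operatorname{aff}F(\Delta)$ is a linear subspace $L$, so lattice distances and translation classes never enter. Any isomorphism between saturated sublattices of equal rank extends to an automorphism of $\Z^4$. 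Therefore two Fine interiors are equivalent if and only if their coordinate descriptions in bases of $L\cap\Z^4$ are $\GL(k,\Z)$-equivalent.

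A complete invariant is then the denominator $d$ together with the \emph{linear} Kreuzer--Skarke normal form of $dF(\Delta)$ in these coordinates. No affine normal form and no index data are needed.

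Two minor points:
\begin{itemize}
\item The polar of a canonical simplex is in general not a lattice polytope. The Fine interior in question is that of $\Delta=\operatorname{conv}(v_0,\ldots,v_4)$ itself, as in the Batyrev--Kasprzyk--Schaller setting.
\item Your sanity check is reversed. Simplex classes refine Fine-interior classes, not the other way round, so what you should check is, e.g., $16072\le 44625$.
\end{itemize}
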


\begin{example}
The $46$ one-dimensional Fine interiors from Proposition~\ref{prop:fine-int}
are
% (up to unimodular equivalence)
the segments $[-p,q] \times 0$ in $\Q^4$, where $[-p,q]$ is one of

\begin{longtable}{cccccc}
$[-7/8, 7/8]$ & $[-6/7, 6/7]$ & $[-6/7, 7/8]$ & $[-5/6, 5/6]$ & $[-5/6, 7/8]$ & $[-4/5, 4/5]$ \\
$[-4/5, 5/6]$ & $[-4/5, 6/7]$ & $[-4/5, 7/8]$ & $[-3/4, 3/4]$ & $[-3/4, 4/5]$ & $[-3/4, 5/6]$ \\
$[-3/4, 6/7]$ & $[-3/4, 7/8]$ & $[-2/3, 2/3]$ & $[-2/3, 3/4]$ & $[-2/3, 4/5]$ & $[-2/3, 5/6]$ \\
$[-2/3, 6/7]$ & $[-2/3, 7/8]$ & $[-1/2, 1/2]$ & $[-1/2, 2/3]$ & $[-1/2, 3/4]$ & $[-1/2, 4/5]$ \\
$[-1/2, 5/6]$ & $[-1/2, 6/7]$ & $[-1/2, 7/8]$ & $[-1/3, 1/3]$ & $[-1/3, 1/2]$ & $[-1/3, 2/3]$ \\
$[-1/3, 3/4]$ & $[-1/3, 5/6]$ & $[-1/3, 6/7]$ & $[-1/3, 7/8]$ & $[-1/4, 1/4]$ & $[-1/4, 1/2]$ \\
$[-1/4, 3/4]$ & $[0, 1/4]$ & $[0, 1/3]$ & $[0, 1/2]$ & $[0, 2/3]$ & $[0, 3/4]$ \\
$[0, 4/5]$ & $[0, 5/6]$ & $[0, 6/7]$ & $[0, 7/8]$ & & \\
\end{longtable}
The associated canonical model $\widetilde Z_\Delta$ has Kodaira dimension one, and,
using~\cite{Gie}*{Theorem 1.1}, we see that the plurigenera are given by 
$$
P_m(\widetilde Z_\Delta)=\lfloor mp\rfloor+\lfloor mq\rfloor+1.
$$
In particular, if two simplices have distinct segments as their Fine interiors,
then their canonical models are not birationally equivalent to each other.
\end{example}
% --------------------------------------------------
% Bibliography
% --------------------------------------------------

\end{document}